\def\crn#1#2{{\vcenter{\vbox{
        \hbox{\kern#2pt \vrule width.#2pt height#1pt
           }
          \hrule height.#2pt}}}}
\newcommand{\pa}{\partial}
\newcommand{\Vol}{\operatorname{Vol}}
\newcommand{\tr}{\operatorname{tr}}
\newcommand{\ep}{\epsilon}
\newcommand{\fe}{\varphi}
\newcommand{\al}{\alpha}
\newcommand{\ga}{\gamma}
\newcommand{\de}{\delta}
\newcommand{\be}{\beta}
\newcommand{\om}{\omega}
\newcommand{\Up}{\Upsilon}
\newcommand{\gbh}{\widehat{\overline{g}}}
\newcommand{\gb}{\overline{g}}
\newcommand{\nb}{\overline{n}}
\newcommand{\nab}{\overline{\nabla}}
\newcommand{\Rb}{\overline{R}}
\newcommand{\Pb}{\overline{P}}
\newcommand{\Vh}{\widehat{V}}
\newcommand{\eh}{\widehat{\epsilon}}
\newcommand{\rh}{\widehat{r}}
\newcommand{\uh}{\widehat{u}}
\newcommand{\Si}{\Sigma} 
\newcommand{\cL}{\mathcal{L}}
\newcommand{\cE}{\mathcal{E}}
\theoremstyle{plain}
\newtheorem{theorem}{Theorem}[section]
\newtheorem{proposition}[theorem]{Proposition}
\theoremstyle{definition}
\theoremstyle{remark}
\newtheorem{remark}[theorem]{Remark}
\numberwithin{equation}{section}
\title[Volume Renormalization for Singular Yamabe Metrics]{Volume 
Renormalization for Singular Yamabe Metrics}
\author{C. Robin Graham}
\address{Department of Mathematics, University of Washington,
Box 354350\\
Seattle, WA 98195-4350, USA}
\email{robin@math.washington.edu}
\begin{document}

\maketitle

\thispagestyle{empty}

\renewcommand{\thefootnote}{}
\footnotetext{Research partially supported by NSF grant \# DMS
  1308266}     
\renewcommand{\thefootnote}{1}

\section{Introduction}\label{introduction}

In \cite{LN}, Loewner and Nirenberg introduced what is now known as the 
singular 
Yamabe problem.  The case of interest here can be formulated as follows.
Given a smooth compact Riemannian manifold-with-boundary $(M^{n+1},\gb)$,
find a defining function $u$ for $\partial M = \Sigma$ so that the scalar  
curvature of the metric $g=u^{-2}\gb$ on $\mathring M$ satisfies 
$R_g=-n(n+1)$.  This 
problem has an obvious conformal invariance:  the rescaling 
$\widehat{\gb} = \Omega^2 \gb$, where $0<\Omega \in C^\infty(M)$,
induces the rescaling $\uh = \Omega u$ leaving $g$ unchanged.  Hence the
datum for the problem can be considered as a conformal class of metrics on
$M$, and its solution produces from the datum a canonical conformally  
compact metric in the conformal class on $\mathring M$.  

It follows from results in \cite{LN}, \cite{AM}, \cite{ACF} that for $n\geq
2$ there always exists a unique solution $u$.  However, $u$ 
might not be smooth up to $\partial M$.  In \cite{ACF},
Andersson, Chru\'sciel and Friedrich showed that $u$ has an   
asymptotic expansion involving powers of $r$ and $\log r$, where $r$ is a
smooth defining function for $\Sigma$.  Moreover, the expansion to order
$n+2$ is locally determined, and there is a single conformally invariant, 
locally determined density on $\Sigma$, the coefficient of the first  
log term $r^{n+2}\log r$ in the expansion, which, if nonzero, 
obstructs smoothness of $u$. 

In \cite{GoW2}, \cite{GoW3}, \cite{GGHW}, Gover and Waldron and
collaborators have developed the singular Yamabe problem as a tool for
studying the geometry of a hypersurface in a conformal manifold, in the
process applying and further developing their boundary calculus for
conformally compact manifolds (\cite{GoW1}).  A starting point for
these investigations was the observation that when $n=2$ and $\gb$ is
Euclidean, the obstruction identified in \cite{ACF} is 
the Willmore invariant of $\Sigma$, i.e. the variational
derivative of the Willmore energy with respect to variations of $\Sigma$.
This led them to interpret the  
obstruction in general as a higher-dimensional generalization of the
Willmore invariant.  In \cite{GoW2}, they raised the question of  
whether in higher dimensions, there is a conformally invariant energy,  
generalizing the Willmore energy, whose variational derivative is the
singular Yamabe obstruction.  The main purpose of this paper is to show 
that such an energy can be constructed by renormalizing the volume of the
singular Yamabe metric.  After we described this work to Gover and
Waldron and showed them our proof, they posted \cite{GoW4}, which also  
discusses volume renormalization for singular Yamabe metrics and proves
the same result.  

The volume renormalization process carried out here is the generalization
to singular Yamabe metrics of volume renormalization for
Poincar\'e-Einstein metrics, introduced in the physics literature and
described in \cite{G1}. 
We consider the asymptotics as $\ep\rightarrow 0$ of 
$\Vol_g(\{r>\ep\})$, where $r$ is the distance to $\Sigma$ in the metric
$\gb$.  The volume form of $g$ has a pointwise expansion \eqref{volform},
\eqref{expand} whose coefficients
$v^{(k)}$ are invariants of the hypersurface $\Sigma$ in the Riemannian
manifold $(M,\gb)$.  These coefficients generalize to the singular Yamabe
problem the 
Poincar\'e-Einstein renormalized volume coefficients which have been the
subject of recent investigations (\cite{GJ}, \cite{CF}, \cite{J1}, 
\cite{G2}, \cite{CFG}, \cite{J2}).  Upon integration, the pointwise
expansion produces an expansion \eqref{sypvolexp} for    
$\Vol_g(\{r>\ep\})$ whose coefficients \eqref{coeffs} are constant
multiples of the integrals of the $v^{(k)}$.  
Our energy is the coefficient of the $\log\frac{1}{\ep}$ term in the
renormalized volume expansion.  It is the integral of $v^{(n)}$ and is
invariant under conformal rescalings of $\gb$.  In \cite{GoW4}, a
version of $Q$-curvature is defined for the singular Yamabe setting and the 
energy is shown to equal the integral of the $Q$-curvature.   
The fact that the variation of the energy with respect to $\Sigma$ is the
log term coefficient in the 
solution $u$ is an analog of the result of \cite{HSS}, \cite{GH} that  
in the Poincar\'e-Einstein case with even-dimensional boundary, the metric 
variation of the log term coefficient in the volume expansion is a constant
multiple of the ambient obstruction tensor, the log term coefficient in the
expansion of the Poincar\'e-Einstein metric itself.  

There is a similar renormalization for 
the area of minimal submanifolds of Poincar\'e-Einstein spaces, also
introduced and used extensively in the physics literature.  Forthcoming
work with Nicholas Reichert will analyze in some detail the corresponding
energy in this setting, introduced in \cite{GrW}, and, among other things,    
will prove the analogous result:  for even dimensional submanifolds, the 
submanifold variation of the log term coefficient in the renormalized area
expansion agrees with the obstruction to smoothness for minimal extension.       
Unlike the extension problems for Poincar\'e-Einstein metrics and minimal
submanifolds, in which log terms occur only for
even-dimensional boundaries, smoothness for the singular Yamabe  
problem is generically obstructed in all dimensions $n>1$ (\cite{GoW3}).     
Another distinction is that in the singular Yamabe problem, the conformal
rescaling happens in the same space in which the extension problem is
posed, rather than on the boundary at infinity.  

{\it Note about terminology.}  In discussing renormalized volume 
generally, we use {\it energy} for the coefficient of 
the log term in the volume expansion, {\it renormalized
volume} for the constant term, and {\it anomaly} for 
the difference of the renormalized volumes corresponding to different
choices of conformal representatives.  The energy is conformally invariant
and is the integral of a local scalar invariant of the geometry
determined by fixing a metric in the conformal class.   
The renormalized volume is global and in general is not
conformally invariant.  The anomaly is the integral of a locally determined  
nonlinear differential operator which depends on local background geometry
applied to the conformal factor.  In the case of a constant conformal  
factor, the anomaly reduces to a multiple of the energy.  But in general, 
the anomaly contains more information.  For pure conformal geometry, the
linearized anomaly determines a  
particular integrand (namely $v^{(n)}$) for the energy, so fixes divergence
terms.  Anomalies associated to submanifolds contain still 
more information:  since the rescaling occurs in the full space but
the integration is over the submanifold, normal derivatives of the
conformal factor appear in the anomaly as well.

In \S\ref{asymptotics} we review the asymptotics of solutions of the
singular Yamabe problem, discuss the renormalized volume expansion, and
describe how Poincar\'e-Einstein volume renormalization is a special case.
In \S\ref{variations} we formulate and prove that the variation of the
energy is the singular Yamabe obstruction.  
In \S\ref{calc} we carry out the calculations outlined in
\S\ref{asymptotics} far enough to identify the first two renormalized
volume coefficients $v^{(1)}$, $v^{(2)}$ for general $n$, and the energy
$\cE$ and the anomaly for $n=2$.  We also compare the $n=2$ energy and
anomaly with the corresponding quantities derived in \cite{GrW} for the
renormalization of the area of a minimal submanifold of the corresponding
Poincar\'e-Einstein space with boundary at infinity equal to $\Sigma$.

\section{Asymptotics}\label{asymptotics}

Let $(M^{n+1},\gb)$, $n\geq 1$ be a Riemannian manifold-with-boundary and
denote $\pa M = \Si$.  We search for a defining  
function $u$ of $\Si$ so that $g=u^{-2}\gb$ has constant scalar curvature
$R_g=-n(n+1)$.  Recall the conformal change of scalar curvature in the
form:   
\begin{equation}\label{confR}
R_{u^{-2}\gb}=-n(n+1)|du|^2_{\gb} +2nu\Delta_{\gb} u +u^2R_{\gb},
\end{equation}
where $\Delta_{\gb}=\gb^{\al\be}\nab_{\al}\nab_{\be}$.
So the singular Yamabe problem amounts to solving the equation
\begin{equation}\label{singyam}
n(n+1)=n(n+1)|du|_{\gb}^2-2nu\Delta_{\gb} u -u^2R_{\gb}. 
\end{equation}

The normal exponential map $\exp:[0,\de)_r\times \Sigma\rightarrow M$ 
relative to $\gb$ is a diffeomorphism onto a neighborhood of
  $\Sigma$, with respect to which $\gb$ takes the form 
\begin{equation}\label{geoform}
\gb=dr^2+h_r
\end{equation}
for a one-parameter family of metrics $h_r$ on $\Sigma$.  So $r$ is the 
$\gb$-distance to $\Sigma$ and $h_0$ the induced metric.  
We use $\al$, $\be$ as indices for objects on $M$, $i$, $j$ for objects
on $\Sigma$, and $0$ for the $r$ factor.  Thus $\al$ corresponds to the
pair $(i,0)$ relative to the product identification induced by $\exp$.  
The derivatives 
$\pa_r^k h_r$ at $r=0$ can be expressed in terms of the
curvature of $\gb$, its covariant derivatives, and the second fundamental
form, which we denote $L_{ij}$.  For instance, denoting $\partial_r$ by
$'$, one has at $r=0$
\begin{equation}\label{derivs}
h'_{ij}=-2L_{ij},\qquad h''_{ij}=-2\Rb_{0i0j} +2L_{ik}L_j{}^k.
\end{equation}
For $\gb$ of the form \eqref{geoform}, we have 
$\Delta_{\gb}u=\pa_r^2u+\frac12 h^{ij}h'_{ij}\pa_ru+\Delta_{h_r}u$.
Thus \eqref{singyam} becomes 
$$
n(n+1)=n(n+1)\left((\pa_ru)^2+h^{ij}\pa_iu\pa_ju\right)
-2nu\left(\pa_r^2u+\tfrac12
h^{ij}h'_{ij}\pa_ru+\Delta_{h_r}u\right)-u^2R_{\gb}.   
$$  

Consider the formal asymptotics of $u$.  Setting $r=0$ and recalling
that $u=0$ when $r=0$ and $u>0$ for $r>0$, one 
concludes that $\pa_ru=1$ at $r=0$.  So write 
$u=r+r^2\fe$.  In terms of $\fe$, the equation becomes   
\begin{equation}\label{fe}
\begin{split}
(1+r\fe)&\Big[ r^2\fe_{rr}+4r\fe_r+2\fe 
+\tfrac12 h^{ij}h'_{ij} (1+2r\fe+r^2\fe_r)+r^2\Delta_{h_r}\fe\Big]\\
-\frac{n+1}{2}&\Big[2(r\fe_r+2\fe)+r(r\fe_r+2\fe)^2+r^3h^{ij}\pa_i\fe\pa_j\fe\Big]\\
+\frac{1}{2n}&r(1+r\fe)^2R_{\gb}=0.
\end{split}
\end{equation}
The Taylor expansion of $\fe$ can be derived by successive differentiation
of this equation at $r=0$.  Just setting $r=0$ gives
\begin{equation}\label{fe0}
\fe|_{r=0} = \frac{1}{4n}h^{ij}h_{ij}' = -\frac{1}{2n}H,
\end{equation}
where $H=h^{ij}L_{ij}$ is the mean curvature.  
Applying $\pa_r^k$ at $r=0$ gives
$$
(k-n)(k+2)\pa_r^k\fe|_{r=0} = \text{lots},
$$
where lots denotes an expression in lower order derivatives of $\fe$ which
have already been determined.  So $\pa_r^k\fe|_{r=0}$ is formally
determined for $1\leq k\leq n-1$, and there is a potential obstruction in
solving for $\pa_r^n\fe|_{r=0}$ which can be resolved by including a term
in the expansion of 
$\fe$ of the form $r^n\log r$.  It follows that we can uniquely determine
functions $\cL$ and $u^{(k)}$, $2\leq k\leq n+1$, on $\Sigma$, so that if
we set 
\begin{equation}\label{uexpand}
u=r+u^{(2)}r^2+\ldots +u^{(n+1)}r^{n+1}+\cL r^{n+2}\log r,
\end{equation}
then $g=u^{-2}\gb$ satisfies  
\begin{equation}\label{almostconst}
R_g = -n(n+1)+O(r^{n+2}\log r).
\end{equation}
The log term coefficient $\cL$ is the singular Yamabe obstruction, a scalar
field on $\Si$.  One sees easily that under a conformal change
$\gbh = \Omega^2 \gb$, $\cL$ transforms by
$\widehat{\cL}=\big(\Omega|_\Sigma\big)^{-n-1}\cL$.

The function $u$ given by \eqref{uexpand} has been defined near $\Sigma$ in
terms of the product identification determined by the exponential map.
Below we consider the asymptotics of the 
global quantity $\operatorname{Vol}_g(\left\{r>\ep\right\})$.  As our
primary interest is in locally determined quantities near $\Sigma$, in such
global considerations it will suffice 
to take $u$ to be any positive function on $M$ with an asymptotic expansion
which agrees $\mod O(r^{n+2})$ with \eqref{uexpand}.  According to
\cite{ACF}, the exact solution $u$ for which   
$R_g=-n(n+1)$ has this property. 

The volume form of $g=u^{-2}\gb$ is given by 
\begin{equation}\label{volform}
dv_g=u^{-n-1}dv_{\gb}
=r^{-n-1}(1+r\fe)^{-n-1}dv_{h_r}dr
=r^{-n-1}(1+r\fe)^{-n-1}\sqrt{\frac{\det h_r}{\det h_0}}dv_{h_0}dr.
\end{equation}
We can expand
\begin{equation}\label{expand}
(1+r\fe)^{-n-1}\sqrt{\frac{\det h_r}{\det h_0}}
=1+v^{(1)}r+v^{(2)}r^2+\cdots +v^{(n)}r^{n}+O(r^{n+1}\log r).
\end{equation}
The coefficients $v^{(k)}\in C^\infty(\Sigma)$ are the singular Yamabe
renormalized volume coefficients.  It follows that
\begin{equation}\label{sypvolexp}
\operatorname{Vol}_g(\left\{r>\ep\right\}) =\int_{r>\ep}dv_g 
= c_0\ep^{-n} +c_1\ep^{-n+1}+\cdots +c_{n-1}\ep^{-1}
+\cE\log \frac{1}{\ep}+V +o(1) 
\end{equation}
with 
\begin{equation}\label{coeffs}
c_k=\frac{1}{n-k}\int_\Si v^{(k)}\,dv_{h_0}, 
\quad 0\leq k\leq n-1,\qquad \cE=\int_\Si v^{(n)}\,dv_{h_0}.
\end{equation}
$\cE$ is the singular Yamabe energy of $\Si$ and $V$ is the 
renormalized volume of $(M,g)$ with respect to the representative metric
$\gb$.  We will see by direct calculation in \S\ref{calc} that if $n=1$,
then $\cL=0$ and $v^{(1)}=0$, and so also $\cE=\int_\Si v^{(1)}\,dv_{h_0}=0$. 

\begin{proposition}\label{syeinv}
$\cE$ is invariant under conformal changes of $\gb$.  
\end{proposition}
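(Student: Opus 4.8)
The plan is to reduce the statement to a comparison of two cutoff‑volume expansions for \emph{the same} metric. Recall from \S\ref{asymptotics} that under a conformal change $\gbh=\Omega^2\gb$, $0<\Omega\in C^\infty(M)$, one has $\uh=\Omega u$, so $\uh^{-2}\gbh=u^{-2}\gb=g$: the singular Yamabe metric is literally unchanged. Hence if $\rh$ denotes the $\gbh$‑distance to $\Si$, then by \eqref{sypvolexp}--\eqref{coeffs} applied to the datum $\gbh$, the quantity $\widehat{\cE}$ is the coefficient of $\log\frac1\ep$ in the expansion of $\Vol_g(\{\rh>\ep\})$, while $\cE$ is that coefficient for $\Vol_g(\{r>\ep\})$. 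It therefore suffices to show that $\Vol_g(\{\rh>\ep\})-\Vol_g(\{r>\ep\})$ admits an asymptotic expansion with no $\log\frac1\ep$ term. Note this difference is genuinely divergent, of order $\ep^{-n}$, since $\rh=(\Omega|_\Si)\,r+O(r^2)$ rather than $r+O(r^{n+2})$; so it cannot be dismissed as negligible — the point is precisely that its divergence is purely power‑law.

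To compute it, I would work in the product coordinates $(r,y)\in[0,\de)\times\Si$ of \eqref{geoform}. The $\gbh$‑distance function $\rh$ to the smooth compact hypersurface $\Si$ is smooth in a neighborhood of $\Si$ and vanishes there to first order, so $\rh=\phi\,r$ with $0<\phi\in C^\infty$, and $\phi|_\Si=\Omega|_\Si$ because $|d\rh|_{\gbh}=1$. Since $\pa_r\rh|_{r=0}>0$, for small $\ep$ the set $\{\rh>\ep\}$ agrees with $\{r>\ep\}$ outside a fixed collar of $\Si$ and equals $\{(r,y):r>\rho(\ep,y)\}$ inside it, where $\rh(\rho(\ep,y),y)=\ep$; by the implicit function theorem $\rho(\ep,y)$ is a \emph{smooth} function of $\ep$ with $\rho(\ep,y)=\Omega(y)^{-1}\ep+O(\ep^2)$ — crucially, with no $\log\ep$ terms (here $\Omega:=\Omega|_\Si$). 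Writing $dv_g=F(r,y)\,dr\,dv_{h_0}(y)$ with $F=r^{-n-1}\big(1+v^{(1)}r+\cdots+v^{(n)}r^n+O(r^{n+1}\log r)\big)$ as in \eqref{volform}, \eqref{expand}, the difference becomes
\[
\Vol_g(\{\rh>\ep\})-\Vol_g(\{r>\ep\})=\int_\Si\int_{\rho(\ep,y)}^{\ep}F(r,y)\,dr\,dv_{h_0}(y).
\]

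Finally I would integrate in $r$ term by term. The term $v^{(n)}r^{-1}$ contributes $v^{(n)}(y)\log\!\big(\ep/\rho(\ep,y)\big)=v^{(n)}(y)\log\Omega(y)+O(\ep)$, which is bounded: it feeds the constant term — this is the source of the anomaly — not $\log\frac1\ep$. Each term $v^{(k)}r^{-n+k}$ with $0\le k\le n-1$ contributes a multiple of $\ep^{-n+k}-\rho(\ep,y)^{-n+k}$, and since $\rho(\ep,y)$ has an ordinary power‑series expansion in $\ep$, so does $\rho(\ep,y)^{-n+k}/\ep^{-n+k}$; this produces only the powers $\ep^{-n+k},\dots,\ep^{-1}$, a constant, and $o(1)$, never a logarithm. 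The $O(r^{n+1}\log r)\cdot r^{-n-1}$ remainder integrates to $O(\ep\log\ep)=o(1)$. Thus the displayed difference is a sum of powers $\ep^{-n},\dots,\ep^{-1}$ plus a constant plus $o(1)$; subtracting the two expansions \eqref{sypvolexp} and matching the $\log\frac1\ep$ coefficients gives $\widehat{\cE}=\cE$. The one subtle point — and the part I expect to be the real obstacle to making airtight — is the absence of a $\log\ep$ term in the $\ep$‑expansion of $\rho(\ep,y)$, i.e.\ the smoothness of the distance function $\rh$ near $\Si$; granting that, everything else is bookkeeping.
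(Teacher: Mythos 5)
Your proof is correct and is essentially the paper's own argument: the paper likewise writes $\rh=e^{\Up}r$ with $\Up$ smooth, inverts to get $r=\rh\,b(x,\rh)$ with $b$ smooth and nonvanishing (your $\rho(\ep,y)$ is its $\eh(x,\ep)=\ep\,b(x,\ep)$), and integrates the volume-form expansion between the two cutoffs to see that the difference contributes only powers of $\ep$ plus the bounded term $\int_\Si v^{(n)}\log b(x,\ep)\,dv_{h_0}$, hence no $\log\frac{1}{\ep}$. The smoothness of $\rh/r$ near $\Si$ that you flag as the delicate point is taken for granted in the paper as well, and is standard for the distance function to a smooth compact hypersurface.
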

\begin{proof}
Let $\gbh=e^{2\om}\gb$ be a conformally related metric, and $\rh$ the
distance to $\Sigma$ with respect to $\gbh$.  Then $\rh=e^{\Up}r$ for some
smooth function $\Up$.  We need to show that the log term coefficients
$\cE$ in 
the volume expansions \eqref{sypvolexp} for $\gb$ and $\gbh$ agree.  We
will derive an expression for the difference of the volume
expansions from which this is immediate, and which we will use in
\S\ref{calc} to calculate the anomaly.  

Use the normal exponential map of $\gb$ to identify $M$ near $\Sigma$
with $[0,\de)_r\times \Sigma$ as above, and denote points of 
$\Sigma$ by $x$.  For fixed $x$, we can solve the relation
$\rh=e^{\Up(x,r)}r$ for $r$ as a function of $\rh$:  $r=\rh b(x,\rh)$,
where $b(x,\rh)$ is a smooth nonvanishing function.  Set $\eh(x,\ep) = \ep 
b(x,\ep)$.  Then $\rh>\ep$ is equivalent to $r>\eh(x,\ep)$. Recalling 
\eqref{volform}, \eqref{expand}, we have 
\begin{equation}\label{diff}
\begin{split}
\mbox{Vol}_g&(\{r>\ep\})-\mbox{Vol}_g(\{\rh>\ep\})
=
\int_{r>\ep}dv_g - \int_{\rh>\ep}dv_g \\
=&\int_\Sigma\int_{\ep}^{\eh}
\sum_{0\leq k\leq n} v^{(k)}(x) r^{-n-1+k}dr dv_{h_0} + o(1)\\
=&
\sum_{0\leq k\leq n-1}\ep^{-n+k}
\int_\Sigma\frac{v^{(k)}(x)}{-n+k}\left ( b(x,\ep)^{-n+k}-1
\right)dv_{h_0}  + \int_\Sigma v^{(n)}(x)\log b(x,\ep)\,dv_{h_0} 
+o(1). 
\end{split}
\end{equation}
Clearly this expression has no $\log {\frac{1}{\ep}}$ term as  
$\epsilon \rightarrow 0$.  
\end{proof}

The anomaly $V-\Vh$ measures the failure of conformal 
invariance of $V$ under the rescaling $\gbh=e^{2\om}\gb$.  Clearly $V-\Vh$
is the constant 
term in the expansion in $\ep$ of the last line of \eqref{diff}.
It follows from this characterization using the kind of analysis that we 
use in \S\ref{calc} that $V-\Vh$ can be expressed as
the integral over $\Sigma$ of a polynomial expression in $\om$ and its
derivatives whose coefficients depend on derivatives of 
curvature and second fundamental form for $\gb$.    

A Poincar\'e-Einstein metric $g$ has constant scalar curvature, so is the 
singular Yamabe metric in its conformal class.  If $g$ is written near
$\Sigma$ in asymptotically hyperbolic normal 
form $g=r^{-2}(dr^2+h_r)$, and we choose $\gb=dr^2+h_r$ near $\Sigma$, then
the associated singular Yamabe defining   
function $u$ is equal to $r\mod O(r^{n+2})$, and the volume renormalization
expansion \eqref{sypvolexp}, \eqref{coeffs} reduces to the usual 
Poincar\'e-Einstein volume renormalization.  For
Poincar\'e-Einstein metrics, the Taylor expansion of $h_r$ to order $n$ is
determined by $h_0$ and is even in $r$, and the singular Yamabe
renormalized volume coefficients $v^{(k)}$ and energy $\cE$ reduce to the
corresponding Poincar\'e-Einstein coefficients for $h_0$.  If  
$n$ is odd, the energy and anomaly vanish.  The obstruction $\cL$ vanishes
in all dimensions since $u=r \mod O(r^{n+2})$ is smooth (the global term in
the expansion of $h_r$ at order $n$ and the log term for $n$ even arising
from the obstruction tensor do not affect $\cL$ since  
they are trace-free).  We note, however, that even for Poincar\'e-Einstein 
metrics, 
the identification of the variation of $\cE$ in Theorem~\ref{sypvariation}
is a different result from that of \cite{HSS}, \cite{GH}.  The functional
$\cE$ is the same in both cases, but Theorem~\ref{sypvariation} varies
$\Sigma$ with the conformal class of $g$ fixed, while \cite{HSS}, \cite{GH}
varies the conformal infinity $h_0$.

\section{Variations}\label{variations}  

Let again $(M,\gb)$ be our Riemannian manifold, and now let $F_t:\Sigma 
\rightarrow M$, $0\leq t< \delta$ be a variation 
of $\Sigma$, i.e. a smoothly varying one-parameter family of embeddings
with $F_0=\text{Id}$.  Set $\Sigma_t=F_t(\Sigma)$ and $\cE_t = 
\int_{\Sigma_t}v^{(n)}_tdv_{\Sigma_t}$.  Also set $\dot {F} =\pa_t F|_{t=0}
\in \Gamma(TM|_\Sigma)$ and $\dot{\cE}=\pa_t \cE_t|_{t=0}$.  Let $\nb$ 
denote the inward pointing $\gb$-unit normal to $\Si$ in $M$.  

\begin{theorem}\label{sypvariation}
If $n\geq 1$, then 
$$
\dot{\cE} = (n+2)(n-1) \int_\Sigma \langle\dot{F},\nb\rangle_{\gb}\, \cL 
\,dv_\Sigma .   
$$
\end{theorem}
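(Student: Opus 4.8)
The plan is to compute $\dot{\cE}$ by differentiating the expression $\cE_t = \int_{\Sigma_t} v^{(n)}_t\, dv_{\Sigma_t}$, but the direct route of varying each $v^{(k)}_t$ as a hypersurface invariant is hopeless. Instead I would exploit the characterization of $\cE$ as the $\log\frac1\ep$ coefficient in $\Vol_g(\{r>\ep\})$ and vary the \emph{domain of integration} rather than the integrand. Concretely, fix the ambient metric $\gb$ and the singular Yamabe metric $g_t = u_t^{-2}\gb$ associated to $\Sigma_t$; since $g_t$ has constant scalar curvature $-n(n+1)$, the conformal class is effectively fixed and $u_t$ is the defining function for $\Sigma_t$ with expansion \eqref{uexpand} (now with $t$-dependent coefficients $u^{(k)}_t$, $\cL_t$). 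Let $r_t$ be the $\gb$-distance to $\Sigma_t$. The key point is that $\cE_t$ is conformally invariant in $\gb$ (Proposition~\ref{syeinv}), so I may compute the variation working in a $t$-dependent gauge; but more usefully, the log-coefficient of $\Vol_{g_t}(\{r_t>\ep\})$ only sees the locally determined part of the expansion of $u_t$ near $\Sigma_t$, so $\dot{\cE}$ depends only on $\dot{F}$ through $\langle\dot F,\nb\rangle$ (tangential components of $\dot F$ are reparametrizations of $\Sigma$ and drop out), which already explains the shape of the answer.

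The main computational step: write $\dot{\cE}$ as a boundary-type term. Since $g_t$ all lie in a fixed conformal class and the renormalized volume coefficients are determined by the conformal class together with the hypersurface, I expect an integration-by-parts / Pohozaev-type identity. The cleanest approach is to differentiate the equation $R_{g_t}=-n(n+1)$ in $t$: writing $\dot u = \pa_t u_t|_{t=0}$, the linearization of the conformal scalar curvature operator \eqref{confR} applied to $\dot u$ vanishes to the appropriate order, giving a linear ODE (in $r$, with $\Sigma$-coefficients) for $\dot u$. Its indicial roots are $-1$ (wait — in the $u=r+\dots$ normalization) the relevant roots, from $(k-n)(k+2)$ in the displayed recursion, are $k=n$ and $k=-2$; so $\dot u$ has expansion $\dot u = \dot{F}^\nu + \dots + (\text{coeff})\, r^{n+1} + (\pa_t\cL)\, r^{n+2}\log r + \dots$ where $\dot F^\nu = \langle\dot F,\nb\rangle$ enters as the leading free data (the variation shifts $\Sigma_t$, hence shifts $u_t$ by $\dot F^\nu$ at leading order). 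Then $\dot{\cE}$ is extracted as the $\log\frac1\ep$ coefficient of $\pa_t\Vol_{g_t}(\{r_t>\ep\})|_{t=0} = -(n+1)\int \dot u\, u^{-n-2}\,dv_{\gb} + (\text{flux through }\{r=\ep\})$, and the flux term through the shrinking tube carries exactly the product of the leading data $\dot F^\nu$ of $\dot u$ against the leading log data $\cL$ of $u$. Matching $r^{n+2}\log r$ against the measure $r^{-n-1}dr$ produces the $\log\frac1\ep$ term, and the Wronskian of the two indicial solutions gives the numerical factor $(n+2)(n-1)$.

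The hard part will be making the flux computation rigorous and pinning down the constant $(n+2)(n-1)$: one must track how $\dot u$'s leading coefficient is genuinely $\langle\dot F,\nb\rangle$ (this requires care with the identification of $M$ near $\Sigma_t$ via the normal exponential map and the relation between $r_t$ and $r_0$, parallel to the $\rh=e^\Up r$ computation in Proposition~\ref{syeinv}), and how the paired $r^{n+2}\log r\cdot r^{-n-1}\,dr$ integral over $(\ep,\de)$ contributes $\cL\cdot\langle\dot F,\nb\rangle\,\log\frac1\ep$ with the right sign and weight. I would organize this as: (i) set up the variation and the gauge, reducing to normal-component data; (ii) derive the linearized equation for $\dot u$ and read off its indicial structure; (iii) compute $\pa_t\Vol_{g_t}(\{r_t>\ep\})$ as an interior term plus a flux term; (iv) extract the $\log\frac1\ep$ coefficient and identify it via the Wronskian pairing of $\dot F^\nu$ and $\cL$, yielding the stated constant. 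The cases $n=1$ (where $\cL=0$, so both sides vanish) and even/odd $n$ require no separate treatment since the argument is uniform in $n\geq 1$.
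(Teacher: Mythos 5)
Your proposal follows essentially the same route as the paper: reduce to normal variations $f=\langle\dot F,\nb\rangle_{\gb}$, identify $\dot{\cE}$ as the $\log\frac1\ep$ coefficient of $\partial_t\Vol_{g_t}(\{r_t>\ep\})$, split that derivative into a domain-variation term plus $-(n+1)\int_{r>\ep}\dot u\,u^{-1}\,dv_g$, and use the linearized constant-scalar-curvature equation to turn the latter into a boundary flux on $\{r=\ep\}$ whose log coefficient is exactly the Wronskian-type pairing of the leading data $f$ of $\dot u$ with the log data $\cL$ of $u$. One correction you will need when pinning down the constant: the dominant log term of $\dot u$ occurs at order $r^{n+1}\log r$ with coefficient $-(n+2)f\cL$ (from differentiating $\cL_t r_t^{n+2}\log r_t$ through $\dot r=-f+O(r)$), not at order $r^{n+2}\log r$ with coefficient $\partial_t\cL$ as written; this term, together with the domain-variation contribution $-(n+1)\int_\Sigma f\cL\,dv_{h_0}$, is what produces $(n+2)(n-1)$.
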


\begin{remark}
As noted above and derived in \S\ref{calc}, both $\cL$ and $\cE$ vanish in
case $n=1$.   
\end{remark}

As in \cite{GH}, the main step in the proof is to express the variation 
of $\operatorname{Vol}_g(\left\{r>\ep\right\})$ as a boundary integral, in 
which the log term in the expansion of $g$ will appear.  In  
both cases, the metrics have constant scalar curvature, so 
$-n(n+1)\operatorname{Vol}_g(\left\{r>\ep\right\})=\int_{r>\ep}R_g\,dv_g$.
In the Poincar\'e-Einstein case, this is the Einstein-Hilbert 
action, which is critical for Einstein metrics.  In the singular Yamabe
case, the variations are within a conformal class, and the total scalar
curvature is critical for constant scalar curvature metrics.  In both
cases, this criticality is used to write the variation as a boundary
integral.

\begin{proof}
Again use the normal exponential map of $\gb$ to identify $M$ near $\Sigma$ 
with $\Sigma \times [0,\de)_r$.  Now $\dot{\cE}$ is a linear functional of  
$\dot{F}$ which depends only 
on the normal component $\langle\dot{F},\nb\rangle_{\gb}$, since $\cE$ is
independent of reparametrizations of $\Sigma$.  Thus it suffices to take 
$F_t(x)=(x,tf(x))$ so that    
$\Si_t=\{r=tf(x)\}$ for some function $f$ on $\Si$.  
Then $\langle\dot{F},\nb\rangle_{\gb}=f$.  Let $r_t$ denote the
geodesic distance to $\Si_t$.  Then the approximate singular Yamabe
defining function $u_t$ for $\Si_t$ analogous to \eqref{uexpand} takes the
form  
\begin{equation}\label{utexpand}
u_t=r_t+u_t^{(2)}r_t^2+\ldots +u_t^{(n+1)}r_t^{n+1}+\cL_t r_t^{n+2}\log r_t
\end{equation}
relative to the product decomposition of $M$ determined by the exponential
map of $\Si_t$.  This $u_t$ has the property that $g_t=u_t^{-2}\gb$
satisfies $R_{g_t}=-n(n+1) +O(r_t^{n+2}\log r_t)$.  Differentiating the
volume expansion \eqref{sypvolexp} gives 
$$
\operatorname{Vol}_{g_t}(\left\{r_t>\ep\right\})\,\dot{}
= \dot{c}_0\ep^{-n} +\dot{c}_1\ep^{-n+1}+\ldots +\dot{c}_{n-1}\ep^{-1}
+\dot{\cE}\log \frac{1}{\ep}+O(1). 
$$
So $\dot{\cE}$ is the coefficient of $\log\frac{1}{\ep}$ in the expansion
of $\operatorname{Vol}_{g_t}(\left\{r_t>\ep\right\})\,\dot{}$.    

Now 
\begin{equation}\label{sum}
\begin{split}
\operatorname{Vol}_{g_t}(\{r_t>\ep\})\,\dot{}
&=\Big(\int_{r_t>\ep}dv_{g_t}\Big)\,\dot{}\\
&=\Big(\int_{r_t>\ep}dv_g\Big)\,\dot{}+\int_{r>\ep}\big(u_t^{-(n+1)}\big)\,\dot{}\,dv_{\gb}\\
&=\Big(\int_{r_t>\ep}dv_g\Big)\,\dot{}-(n+1)\int_{r>\ep}\dot{u}u^{-1}\,dv_{g}.
\end{split}
\end{equation}
We identify the coefficient of $\log \frac{1}{\ep}$ in the expansion of
each of the two terms on the last line of \eqref{sum}. 

For the first term, observe that $\{r_t>\ep\}$ can alternately be written
as $\{r>\psi(x,t,\ep)\}$ for a 
smooth function $\psi(x,t,\ep)$.  In fact, $\psi(x,t,\ep)$ is the
$r$-coordinate of $\exp_{\Si_t}\big((x,tf(x)),\ep\big)$, i.e. the
$r$-coordinate of the point obtained by following for time $\ep$ the normal
geodesic to $\Si_t$ originating from the point $(x,tf(x))$. In particular,
$\psi(x,0,\ep)=\ep$ and $(\pa\psi/\pa t)(x,0,\ep)=f(x,\ep)$ for a smooth  
function $f(x,\ep)$ satisfying $f(x,0)=f(x)$.  Therefore for $\ep_0>0$
small and fixed and $\ep<<\ep_0$, we have 
\[
\begin{split}
\Big(\int_{r_t>\ep}dv_g\Big)\,\dot{}&=\Big(\int_\Si 
\int_{\psi(x,t,\ep)}^{\ep_0} u_0(x,r)^{-n-1}\sqrt{\frac{\det h_r(x)}{\det h_0(x)}}\,dr
  dv_{h_0}(x)\Big)\,\dot{}  \\
&=- \int_\Si f(x,\ep)u_0(x,\ep)^{-n-1}\sqrt{\frac{\det h_{\ep}(x)}{\det h_0(x)}}\,
dv_{h_0}(x).  
\end{split}
\]
Now $f(x,\ep)\sqrt{\frac{\det h_{\ep}(x)}{\det h_0(x)}}$ is smooth in $\ep$
and equals $f(x)$ at $\ep=0$.  So recalling \eqref{uexpand}, it follows
that the $\log\frac{1}{\ep}$ coefficient in  
$\Big(\int_{r_t>\ep}dv_g\Big)\,\dot{}$ is 
\begin{equation}\label{firstterm}
-(n+1)\int_\Si f\cL \,dh_{0}.   
\end{equation}

To analyze the second term, set $\om_t=-\log (u_t/u_0)$ so that
$u_t=e^{-\om_t}u_0$ and $g_t=e^{2\om_t}g$.  The scalar curvature of $g_t$
is given by
$$
R_{e^{2\om_t}g}=e^{-2\om_t}\left[R_g-2n\Delta_g\om_t
  -n(n-1)|d\om_t|_{g}^2\right].  
$$
Differentiating gives 
\begin{equation}\label{Rdot}
(R_{g_t})\,\dot{}=-2(n\Delta_g\dot{\om} +R_g\dot{\om}).
\end{equation}
Differentiation of \eqref{almostconst} for $g_t$ shows that 
$(R_{g_t})\,\dot{}=O(r^{n+1}\log r)$.  Also
\[
\begin{split}
R_g\dot{\om}&=-n(n+1)\dot{\om} + O(\dot{\om}r^{n+2}\log r) \\
&=-n(n+1)\dot{\om} + O(\frac{\dot{u}r^{n+2}\log r}{u})\\
&=-n(n+1)\dot{\om} + O(r^{n+1}\log r).
\end{split}
\]
Therefore \eqref{Rdot} gives
$$
(n+1)\dot{\om}=\Delta_g \dot{\om} +O(r^{n+1}\log r).  
$$
Hence
\begin{equation}\label{secondterm}
\begin{split}
-(n+1)\int_{r>\ep}\dot{u}u^{-1}\,dv_{g}&=(n+1)\int_{r>\ep}\dot{\om}\,dv_{g}\\
&=\int_{r>\ep}\Delta_g\dot{\om}\,dv_{g} +O(1)\\
&=\int_{r=\ep}\pa_{\nu_g}\dot{\om}\, d\sigma_g +O(1),
\end{split}
\end{equation}
where $\nu_g$ is the outward-pointing $g$-unit normal and $d\sigma_g$ 
the induced area element on $\{r=\ep\}$.  Since $g=u^{-2}(dr^2+h_r)$, 
we have $\pa_{\nu_g}=-u\pa_r$ and $d\sigma_g = u^{-n}dv_{h_\ep}$.  Hence
\begin{equation}\label{newsecondterm}
-(n+1)\int_{r>\ep}\dot{u}u^{-1}\,dv_{g}
=\int_{r=\ep}\big(u^{-n}\dot{u}_r-u^{-n-1}u_r\dot{u}\big)\,dv_{h_\ep} +O(1).
\end{equation}
Now $\dot{r}$ is a smooth function of $(x,r)$ which equals $-f(x)$ at
$r=0$.  So it follows by differentiation of \eqref{utexpand} that $\dot{u}$
takes the form
$$
\dot{u}=-f(x,r) -(n+2)f(x)\cL(x)r^{n+1}\log r +O(r^{n+1}),
$$
where $f(x,r)$ is a smooth function satisfying $f(x,0)=f(x)$.  (This
$f(x,r)$ need not be the same function as the $f(x,\ep)$ which entered into 
the analysis of the first term above.)  Differentiating with respect to
$r$, we conclude that 
$$
\dot{u}_r = s(x,r) -(n+2)(n+1)f(x)\cL(x)r^n\log r +O(r^{n})
$$
for a smooth function $s(x,r)$.  {From} \eqref{uexpand}, it follows that 
\[
\begin{split}
u^{-n}&=r^{-n}\Big(\lambda(x,r) +O(r^{n+1}\log r)\Big)\\
u^{-n-1}&=r^{-n-1}\Big(\mu(x,r) -(n+1)\cL(x)r^{n+1}\log r
+O(r^{n+1})\Big),  
\end{split}
\]
where $\lambda$ and $\mu$ are smooth functions satisfying
$\lambda(x,0)=\mu(x,0)=1$.  Hence the coefficient of 
$\log \frac{1}{\ep}$ in  $\int_{r=\ep}u^{-n}\dot{u}_r\,dv_{h_\ep}$ is
$$
(n+2)(n+1)\int_\Si f\cL\,dv_{h_0}
$$
and the coefficient of $\log \frac{1}{\ep}$ in
$\int_{r=\ep}u^{-n-1}u_r\dot{u}\,dv_{h_\ep}$ is
$$
(n+3)\int_\Si f\cL\,dv_{h_0}.
$$
Combining these in \eqref{newsecondterm}, it follows that the coefficient
of $\log\frac{1}{\ep}$ in $-(n+1)\int_{r>\ep}\dot{u}u^{-1}\,dv_{g}$ is  
$$
(n^2+2n-1)\int_\Si f\cL\,dv_{h_0}.
$$
Combining with \eqref{firstterm} in \eqref{sum} concludes the proof. 
\end{proof}

\section{Calculations}\label{calc}

Recall from \S\ref{asymptotics} that $g=u^{-2}\gb$ and we write
$u=r+r^2\fe$.  The Taylor expansion of $\fe$ is determined by successive
differentiation of \eqref{fe}, and the coefficients $v^{(k)}$ are
determined by the expansion \eqref{expand}.  In this section we outline the
calculation of $v^{(1)}$ and $v^{(2)}$ via this prescription.  In  
particular, this identifies $\cE$ for $n=2$.  We also calculate 
the anomaly in the renormalized volume $V$ for $n=2$.  Throughout this
section we write $\Rb_{\al\be\ga\de}$ and $\Rb$ for the curvature tensor
and scalar curvature of $\gb$, and $R_{ijkl}$ and $R$ for the curvature
tensor and scalar curvature of the induced metric $h=h_0$ on $\Sigma$.  

Equation \eqref{fe0} identifies $\fe|_{r=0}$.  Differentiating \eqref{fe} at
$r=0$ and substituting \eqref{derivs} gives  
\begin{equation}\label{federiv}
3(n-1)\fe_r|_{r=0}=
\frac{1}{n}H^2 -|L|^2-h^{ij}\Rb_{0i0j}+\frac{1}{2n}\Rb.
\end{equation}
Now 
\begin{equation}\label{Rb}
\Rb=\gb^{\al\be}\gb^{\ga\de}\Rb_{\al\ga\be\de}=2h^{ij}\Rb_{0i0j}+h^{ij}h^{kl}\Rb_{ikjl}.   
\end{equation}
The Gauss equation states
$
\Rb_{ikjl}=R_{ikjl}+L_{il}L_{jk}-L_{ij}L_{kl}, 
$
so 
$
h^{ij}h^{kl}\Rb_{ikjl}=R +|L|^2-H^2. 
$
Substituting this in \eqref{Rb} and solving for $h^{ij}\Rb_{0i0j}$ 
gives
\begin{equation}\label{trRb}
h^{ij}\Rb_{0i0j}= \tfrac12\big(\Rb-R-|L|^2+H^2\big).
\end{equation}
Substituting \eqref{trRb} in \eqref{federiv} and then decomposing  
$L_{ij}={\mathring L}_{ij}   
+\frac{1}{n}Hh_{ij}$ gives finally
$$
3(n-1)\fe_r|_{r=0}=\frac{1-n}{2n}\big(\Rb +H^2\big)
+\frac12 \big(R-|\mathring L|^2\big).  
$$
When $n=1$, we have $R=0$ and $\mathring L =0$.  So in this case this   
equation states $0=0$, which shows that $\cL=0$ for $n=1$.  

To calculate $v^{(1)}$ and $v^{(2)}$, first observe that for any
1-parameter family of metrics $h_r$,
$$
\sqrt{\frac{\det h_r}{\det h_0}}=1+\tfrac12(\tr_h h') r 
+\tfrac14\big[\tr_hh''-|h'|_h^2+\tfrac12 (\tr_hh')^2\big]r^2+\cdots.
$$
Substituting \eqref{derivs} and then \eqref{trRb} shows that this becomes 
\begin{equation}\label{det}
\sqrt{\frac{\det h_r}{\det h_0}}=1-Hr
+\tfrac14\big[R-\Rb-|L|^2+H^2\big]r^2 +\cdots.  
\end{equation}
The Taylor expansion of $\fe$ is determined $\mod O(r^2)$ by \eqref{fe0} 
and \eqref{federiv}.  Using this to calculate the expansion of
$(1+r\fe)^{-n-1}$ and then multiplying by \eqref{det} and simplifying, one
finds 
\[
\begin{split}
(1+r\fe)^{-n-1}&\sqrt{\frac{\det h_r}{\det h_0}}\\
=1+&\frac{1-n}{2n}Hr 
+\Big[\frac{n-5}{12(n-1)}\big(R-|\mathring L|^2\big)
+\frac{n-2}{24n^2}\Big((n-3)H^2-2n\Rb\Big)\Big]r^2 + O(r^3).
\end{split}
\]
Thus
\begin{equation}\label{v12}
\begin{split}
v^{(1)}&=\frac{1-n}{2n}H\\
v^{(2)}&=\frac{n-5}{12(n-1)}\big(R-|\mathring L|^2\big)
+\frac{n-2}{24n^2}\Big((n-3)H^2-2n\Rb\Big).
\end{split}
\end{equation}
Substituting the above expressions for $v^{(1)}$, $v^{(2)}$ into
\eqref{coeffs} gives formulae for 
$c_1$, $c_2$ in \eqref{sypvolexp}.  

One can consider volume expansions $\Vol_g(\{\rho>\ep\})$ for other
defining functions $\rho$ of $\Sigma$.  Changing from $r$ to $\rho$ is 
equivalent to changing the choice of background metric from $\gb$ to 
$\Omega^2 \gb$ with $\Omega = |d\rho|_{\gb}$, since
$|d\rho|_{\Omega^2\gb}=1$ so that $\rho$ is the distance to $\Sigma$ in the
metric $\Omega^2 \gb$.  So Proposition~\ref{syeinv}  
implies that the coefficient of $\log \frac{1}{\ep}$ 
(the energy) is independent of the choice of $\rho$.  If one takes 
$\rho =u$, then the coefficients of all the divergent terms are 
integrals of local invariants of $\gb$, just like for $\rho =r$, since the 
Taylor expansion of $u$ 
is locally determined by $\gb$.  In \cite{GoW4}, closed formulae  
are derived for all the coefficients $c_0,\ldots, c_{n-1}, \cE$ for a
general defining function $\rho$.  In the case $\rho = u$,  
the formulae are made explicit in terms of the geometry of $\gb$
for the coefficients $c_1$, $c_2$, $c_3$.  

Observe from \eqref{v12} that $v^{(1)}=0$ when $n=1$.  So also $\cE=0$ when
$n=1$.  When $n=2$ we have  
$$
v^{(2)}=\frac14\big(|\mathring L|^2-R\big).
$$
So the singular Yamabe energy for $n=2$ is
$$
\cE=\frac14\int_\Sigma (|\mathring L|^2-R)\,dv_{h_0}.
$$
One recognizes this as a linear combination of the Willmore energy and the 
Euler characteristic of $\Sigma$.

Finally we derive the anomaly for the renormalized volume for $n=2$.  Let  
$\gb$ and $\gbh=e^{2\om}\gb$ be conformally related metrics, and let $V$
and $\Vh$ be the associated renormalized volumes for $(M,g)$.  The
difference $V-\Vh$ is the constant term in the expansion of 
$\mbox{Vol}_g(\{r>\ep\})-\mbox{Vol}_g(\{\rh>\ep\})$.  Equation \eqref{diff}
gives a formula for this in terms of the function $b(x,\ep)$. 
We calculate enough of the Taylor expansion of $b(x,\ep)$ to evaluate 
the constant term in expansion of the last line of \eqref{diff} when $n=2$.    

The distance $\rh$ for the metric $\gbh$ is determined by the
eikonal equation $|d\rh|^2_{\gbh}=1$.  Using $\gbh=e^{2\om}\gb$ and
writing $\rh=e^\Up r$, this can be written 
$e^{2(\Up -\om)}|dr+rd\Up|^2_{\gb}=1$, or 
$$
2r\Up_r+r^2\big[(\Up_r)^2+h_r^{ij}\Up_i\Up_j\big]=e^{2(\om-\Up)}-1.  
$$
Setting $r=0$ gives $\Up(x,0)=\om(x,0)$.  Differentiating with respect to 
$r$ gives $\Up_r=\frac12 \om_r$ at $r=0$.  Differentiating again gives
$\Up_{rr}=\frac13\big[\om_{rr}+\frac14 (\om_r)^2-\om_i\om^i\big]$ at
$r=0$.  Thus
$$
\Up(x,r)=\om(x,0)+\tfrac12 \om_r(x,0) r 
+\tfrac16 \big[\om_{rr}+\tfrac14 (\om_r)^2-\om_i\om^i\big]r^2 +O(r^3).
$$
Now solve the equation $\rh=e^{\Up(x,r)}r$ for $r$ as a function of $\rh$: 
$r=\rh b(x,\rh)$.  It is elementary to carry this out to obtain
\begin{equation}\label{bexpand}
b(x,\rh)=e^{-\om}
\big[1-\tfrac12 \om_r e^{-\om}\rh 
+\big(\tfrac13(\om_r)^2+\tfrac16\om_i\om^i-\tfrac16\om_{rr}\big)e^{-2\om}\rh^2\big] 
+O(\rh^3),
\end{equation}
where $\om$ and its derivatives are all evaluated at $(x,0)$.  

As noted above, $V-\Vh$ is the constant term in the
expansion of the last line of \eqref{diff}.  Taking $n=2$, this is
the constant term in the expansion of
\begin{equation}\label{diff2}
\int_\Sigma \Big[ -\tfrac12 \ep^{-2}b(x,\ep)^{-2}
-\ep^{-1}v^{(1)}(x)b(x,\ep)^{-1} +v^{(2)}(x)\log b(x,\ep)\Big]\,dv_h. 
\end{equation}
The constant term in $\log b(x,\ep)$ is clearly $-\om(x,0)$.  Easy
calculations manipulating the expansion \eqref{bexpand} show that the
coefficient of 
$\ep$ in the expansion of $b(x,\ep)^{-1}$ is $\frac12 \om_r(x,0)$ and the
coefficient of $\ep^2$ in the expansion of $b(x,\ep)^{-2}$ is 
$\big[\frac{1}{12}(\om_r)^2-\frac13\om_i\om^i+\frac13\om_{rr}\big](x,0)$.
Putting this into \eqref{diff2} along with \eqref{v12} for $n=2$ and 
collecting the terms gives the following.  
\begin{proposition}\label{anomaly}
When $n=2$, the anomaly is given  by
$$
V-\Vh=-\frac18 
\int_\Sigma\Big[2(|\mathring L|^2-R)\om
-H\om_r+\frac{1}{3}\big(4\om_{rr}-4\om_i\om^i+(\om_r)^2)\Big]\,dv_h.
$$
\end{proposition}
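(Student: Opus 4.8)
The plan is to identify the constant ($\ep^0$) term in the $\ep\to0$ expansion of the volume difference \eqref{diff2}, which by the discussion following Proposition~\ref{syeinv} is exactly the anomaly $V-\Vh$. The two terms dropped in passing from the last line of \eqref{diff} to \eqref{diff2}, namely $\tfrac12\ep^{-2}\int_\Sigma dv_h$ and $\ep^{-1}\int_\Sigma v^{(1)}\,dv_h$, are purely divergent and contribute nothing at order $\ep^0$, so the whole computation reduces to expanding $b(x,\ep)$ to second order, which \eqref{bexpand} already supplies.

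First I would rewrite \eqref{bexpand} as $b(x,\ep)=e^{-\om}\bigl(1+a_1\ep+a_2\ep^2+O(\ep^3)\bigr)$ with $a_1=-\tfrac12\om_re^{-\om}$ and $a_2=\bigl(\tfrac13(\om_r)^2+\tfrac16\om_i\om^i-\tfrac16\om_{rr}\bigr)e^{-2\om}$, all quantities evaluated at $(x,0)$. Expanding in powers of $\ep$ then gives $\log b=-\om+a_1\ep+O(\ep^2)$, $b^{-1}=e^{\om}\bigl(1-a_1\ep+O(\ep^2)\bigr)$, and $b^{-2}=e^{2\om}\bigl(1-2a_1\ep+(3a_1^2-2a_2)\ep^2+O(\ep^3)\bigr)$. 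The pieces of these that feed into the $\ep^0$ term of \eqref{diff2} are the constant term $-\om$ of $\log b$, the coefficient $-e^{\om}a_1=\tfrac12\om_r$ of $\ep$ in $b^{-1}$, and the coefficient $e^{2\om}(3a_1^2-2a_2)=\tfrac1{12}(\om_r)^2-\tfrac13\om_i\om^i+\tfrac13\om_{rr}$ of $\ep^2$ in $b^{-2}$.

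Next I would assemble the $\ep^0$ term of \eqref{diff2} at $n=2$. The summand $-\tfrac12\ep^{-2}b^{-2}$ contributes $-\tfrac12\bigl(\tfrac1{12}(\om_r)^2-\tfrac13\om_i\om^i+\tfrac13\om_{rr}\bigr)$; the summand $-\ep^{-1}v^{(1)}b^{-1}$ contributes $-\tfrac12 v^{(1)}\om_r=\tfrac18 H\om_r$ after substituting $v^{(1)}=-\tfrac14 H$ from \eqref{v12}; and the summand $v^{(2)}\log b$ contributes $-v^{(2)}\om=-\tfrac14(|\mathring L|^2-R)\om$ after substituting $v^{(2)}=\tfrac14(|\mathring L|^2-R)$. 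Adding these three contributions, integrating over $\Sigma$ against $dv_h$, and pulling out the common factor $-\tfrac18$ yields the stated formula. The only step really prone to error is the bookkeeping: because the power $\ep^{-k}$ in \eqref{diff2} rises with $k$, the $\ep^0$ term recruits the order-$\ep^k$ coefficient of $b$, so one must carry \eqref{bexpand} to exactly second order and keep every factor of $e^{\om}$ straight; beyond that there is no conceptual obstacle.
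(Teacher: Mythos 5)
Your proposal is correct and follows the paper's own proof essentially verbatim: both extract the constant term of \eqref{diff2} by reading off the constant term of $\log b$, the $\ep$-coefficient of $b^{-1}$, and the $\ep^2$-coefficient of $b^{-2}$ from \eqref{bexpand}, then substitute $v^{(1)}=-\tfrac14 H$ and $v^{(2)}=\tfrac14(|\mathring L|^2-R)$ from \eqref{v12}. The intermediate coefficients you compute ($\tfrac12\om_r$ and $\tfrac1{12}(\om_r)^2-\tfrac13\om_i\om^i+\tfrac13\om_{rr}$) match the paper's exactly, and the observation that the dropped purely divergent terms cannot contribute at order $\ep^0$ is the same justification the paper uses implicitly.
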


It is interesting to compare the singular Yamabe energy and anomaly with
the corresponding quantities $\cE_{\text{min area}}$ and
$(V-\Vh)_{\text{min area}}$ arising from the 
renormalization of the area of the minimal submanifold of the
Poincar\'e-Einstein space associated 
to $\gb$ whose boundary at infinity is equal to $\Sigma$.  This energy was  
calculated in \cite{GrW} to be 
$$
\cE_{\text{min area}} = -\frac18 \int_\Sigma \big(H^2 +
4h^{ij}\Pb_{ij}\big) dv_h,
$$
and the corresponding anomaly was calculated in Proposition 2.2 of
\cite{GrW} to be 
$$
(V-\Vh)_{\text{min area}} = \frac18 
\int_\Sigma\Big[(H^2+4h^{ij}\Pb_{ij})\om 
-2H\om_r+2\om_i\om^i\Big]\,dv_h.
$$
Here
$\Pb_{\al\be}=\frac{1}{n-1}\left(\Rb_{\al\be}-\frac{\Rb}{2n}\gb_{\al\be}\right)$   
denotes the Schouten tensor of $\gb$.  These quantities can be compared to 
those for the singular Yamabe problem via the 
identity $H^2+4h^{ij}\Pb_{ij}=2(|\mathring L|^2+R)$ for $n=2$.   
In particular, 
$$
\cE_{\text{min area}}=-\cE -2\pi \chi(\Sigma).
$$
But clearly there is not such a simple relation between the anomalies.


\begin{thebibliography}{BEGM}

\bibitem[ACF]{ACF} L. Andersson, P. T. Chru\'sciel and H, Friedrich, {\it
  On the regularity of solutions to the Yamabe equation and the existence
  of smooth hyperboloidal initial data for Einstein's field equations},
 Comm. Math. Phys. {\bf 149} (1992), 587--612.  

\bibitem[AM]{AM} P. Aviles and R. C. McOwen, {\it Complete conformal
  metrics with negative scalar curvature in compact Riemannian manifolds}, 
Duke. Math. J. {\bf 56} (1988), 395--398. 


\bibitem[CF]{CF} S.-Y. A. Chang and H. Fang, {\it A class of variational 
functionals in conformal geometry}, Int. Math. Res. Not. 
(2008), rnn008, 16 pages, {\tt arXiv:0803.0333}. 

\bibitem[CFG]{CFG} S.-Y. A. Chang, H. Fang and C. R. Graham, {\it A note on 
  renormalized volume functionals}, Diff. Geom. Appl. {\bf 33}
  (2014), 246--258, {\tt arXiv:1211.6422}.  

\bibitem[HSS]{HSS} S. de Haro, K. Skenderis and S.N. Solodukhin, 
{\it Holographic reconstruction of spacetime and renormalization in the
AdS/CFT correspondence}, Comm. Math. Phys. {\bf 217} (2001), 594--622, 
{\tt arXiv:hep-th/0002230}.  

\bibitem[GGHW]{GGHW} M. Glaros, A. R. Gover, M. Halbasch and A. Waldron, {\it
Singular Yamabe problem Willmore energies}, {\tt arXiv:1508.01838}.

\bibitem[GoW1]{GoW1} A. R. Gover and A. Waldron, {\it Boundary calculus for
conformally compact manifolds}, Indiana Univ. Math. J. {\bf 63}  
(2014), 120--163, {\tt arXiv:1104.2991}. 

\bibitem[GoW2]{GoW2} A. R. Gover and A. Waldron, {\it Generalising the Willmore
equation: submanifold conformal invariants from a boundary Yamabe
problem}, {\tt arXiv:1407.6742}.  

\bibitem[GoW3]{GoW3} A. R. Gover and A. Waldron, {\it Conformal hypersurface
geometry via a boundary Loewner-Nirenberg-Yamabe problem},  
{\tt arXiv:1506.02723}.  

\bibitem[GoW4]{GoW4} A. R. Gover and A. Waldron, {\it Renormalized volume},   
{\tt arXiv:1603.07367}.  

\bibitem[G1]{G1} C. R. Graham, {\it Volume and area renormalizations for
conformally compact Einstein metrics}, Rend. Circ. Mat. Palermo,
Ser. II, Suppl. {\bf 63} (2000), 31--42, {\tt arXiv:math/9909042}.

\bibitem[G2]{G2} C. R. Graham, {\it Extended obstruction tensors and
renormalized volume coefficients}, Adv. Math. {\bf 220} (2009),
1956--1985, {\tt arXiv:0810.4203}.  

\bibitem[GH]{GH} C. R. Graham and K. Hirachi, 
{\it The ambient obstruction tensor and $Q$-curvature}, in 
{\sl AdS/CFT Correspondence: Einstein
Metrics and their Conformal Boundaries}, IRMA Lectures in Mathematics and 
Theoretical Physics {\bf 8} (2005), 59--71, {\tt arXiv:math/0405068}. 

\bibitem[GJ]{GJ} C. R. Graham and A. Juhl, {\it Holographic formula for
$Q$-curvature}, Adv. Math. {\bf 216} (2007), 841--853, {\tt
arXiv:0704.1673}.    

\bibitem[GrW]{GrW} C. R. Graham and E. Witten, {\it Conformal anomaly of
  submanifold observables in AdS/CFT correspondence}, Nuclear Physics B
  {\bf 546} (1999), 52--64, {\tt  arXiv:hep-th/9901021}.  

\bibitem[J1]{J1} A. Juhl, {\it Families of Conformally Covariant Differential 
  Operators, $Q$-Curvature and Holography}, Progress in Mathematics 275,
  Birkh\"auser, 2009.  

\bibitem[J2]{J2} A. Juhl, {\it Heat kernel expansions, ambient metrics and
  conformal invariants}, Adv. Math. {\bf 286} (2016), 545--682,  {\tt 
  arXiv:1411.7851}. 


\bibitem[LN]{LN} C. Loewner and L. Nirenberg, Partial differential
  equations invariant under conformal or projective transformations, in
  {\it Contributions to analysis (a collection of papers dedicated to
    Lipman Bers)}, 245--272, Academic Press, New York, 1974.  


\end{thebibliography}
\end{document}